\numberwithin{equation}{section}
\newtheorem{theorem}{Theorem}[section]
\newtheorem{lemma}[theorem]{Lemma}
\newtheorem{prop}[theorem]{Proposition}
\theoremstyle{definition}
\theoremstyle{remark}
\newtheorem*{remark}{Remark}
\begin{document}
\openup3pt

\title{The Twisted Mellin Transform}
\author{Zuoqin Wang}
\address{Department of Mathematics\\
MIT\\Cambridge, MA  02139 \\ USA}
\email{wangzq@math.mit.edu}
\date{}

\begin{abstract}
The ``twisted Mellin transform" is a slightly modified version of
the usual classical Mellin transform on $L^2([0, \infty))$. In this
short note we investigate some of its basic properties. From the
point of view of combinatorics one of its most interesting properties is that it intertwines the differential
operator, $df/dx$, with its finite difference analogue, $\nabla f=
f(x)-f(x-1)$. From the point of view of analysis one of its most
important properties is that it describes the asymptotics of one
dimensional quantum states in Bargmann quantization.
\end{abstract}

\maketitle

\section{Introduction}
\label{sec:1}

The standard Mellin Transform is defined by the formula
\begin{equation}
 \label{eq:1.1}
 Mf(s) = \int_0^\infty f(x)x^{s-1} dx.
\end{equation}
This paper has as its topic the following ``twisted" version of
(\ref{eq:1.1}):
\begin{equation}
 \label{eq:1.2}
 \mathcal{M}f(s) = \frac{\int_{0}^\infty f(x) x^s
 e^{-x}\ dx}{\int_0^\infty x^s e^{-x}\ dx}.
\end{equation}
This transform has a number of remarkable properties, the most
remarkable perhaps being that it intertwines the standard
differential operator, $\frac d{dx}$, and the finite difference
analogue of $\frac d{dx}$:
\begin{equation}
 \label{eq:1.3}
 \nabla f(x) = f(x) - f(x-1).
\end{equation}
By a theorem of Mullin and Rota it is known that there exists an
invertible operator intertwining the ``umbral" calculi generated by
$\frac d{dx}$ and $\nabla$; but, as far as we know the explicit
expression (\ref{eq:1.2}) for this intertwiner is new.

The main topic of this paper is an asymptotic formula for
(\ref{eq:1.2}) when the function, $f$, in the integrand is a
\emph{symbol} of degree $k$, i.e. has the property that for any $r
\in \mathbb N$ there exists a constant $C_r$ such that
\begin{equation}
 \label{eq:1.4}
 \left|\frac{d^rf}{dx^r} (x)\right| \le C_r x^{k-r}.
\end{equation}
More explicitly, we will show that for such functions
\begin{equation}
 \label{eq:1.5}
 \mathcal{M}f(x) \sim \sum_r f^{(r)}(x) g_r(x),
\end{equation}
where $f^{(r)}(x)=\frac {d^r}{dx^r}f(x)$, and $g_r(x)$ is a
polynomial of degree $[r/2]$ given by a simple recursion formula. In
some joint work with V. Guillemin, now in progress, we will use this
formula to obtain results about the spectral density functions of
toric varieties.

A few words about the organization of this paper. In section 2 we
will prove some elementary facts about the domain and range of
$\mathcal{M}$, derive a twisted version of the standard inversion
formula for the Mellin transform, prove that $\mathcal{M}$ has the
intertwining property that we described above and compile a table of
twisted Mellin transforms for most of the standard elementary
functions. In section 3 we will use steepest descent techniques to
derive (\ref{eq:1.5}) and give two rather different recipes for
computing the $g_r$'s, one analytic and one combinatorial. (By
comparing these two recipes we obtain some curious combinatorial
identities for the Stirling numbers of the first kind.)

We would like to thank Richard Stanley for a number of helpful
comments on the umbral calculus, Stirling numbers, and his
suggestion on combinatorial properties of the sequence of
functions $f_r$'s.

\section{The Twisted Mellin transform}

Let $\mathbb C$ be equipped with the Bargmann measure $\mu =
e^{-|z|^2}dzd\bar z.$  Given a function $f \in C^\infty(\mathbb
C)$, one would like to study the asymptotics of the spectral
measure
\begin{equation*}
T_k(f) = \mathrm{Tr}(\pi_{k} M_f \pi_{k}),
\end{equation*}
associated with the quantum eigenstate, $z^k$, as $k \to \infty$,
where $\pi_k$ is the orthogonal projection from $L^2(\mathbb C,
\mu)$ onto the one dimensional subspace spanned by $z^k$, and
$M_f$ is the operator ``multiplication by $f$". By averaging with
respect to the $\mathbb T^1$-action, we can assume $f \in
C^\infty(\mathbb C)^{\mathbb T^1}$, i.e.
\begin{equation*}
f(z) = f(r^2),
\end{equation*}
where $r=|z|$ is the modulus of complex number $z$.

For $k \in \mathbb N$, one has
\begin{equation*}
\aligned T_k(f)=\frac{\langle f z^k, z^k \rangle_\mu}{\langle z^k,
z^k\rangle_\mu} & = \frac{\int_0^\infty f(r^2) r^{2k+1} e^{-r^2}\
dr}{\int_0^\infty
r^{2k+1} e^{-r^2}\ dr}\\
& = \frac{\int_0^\infty f(x) x^{k} e^{-x}\ dx}{\int_0^\infty x^{k}
e^{-x}\ dx}.
\endaligned
\end{equation*}
So the asymptotic properties of $T_k(f)$ can be deduced from
asymptotic properties of the twisted Mellin transform
\begin{equation}
\label{eq:2.1}
 \mathcal{M}f(s) =f^{\#}(s) = \frac{\int_{0}^\infty f(x) x^s
 e^{-x}\ dx}{\int_0^\infty x^s e^{-x}\ dx}
\end{equation}
where $f \in C^\infty(\mathbb R^+)$ and $s \ge 0$. Note that the
denominator is the Gamma function $\Gamma(s+1)$, while the numerator
is just the Mellin transform of the function $xe^{-x}f(x)$.

For this integral to converge, we will assume that $f$ is of
polynomial growth, i.e.
\begin{equation}
 \label{eq:2.2}
 |f(x)| \le C x^N \mbox{\quad for\ some\ }N.
\end{equation}

Some basic properties of the transform are the following

\begin{prop}
\label{prop:2.1}

 Suppose $a, b \in \mathbb R$, $c>0$, $n \in \mathbb
 N$, $f$ is a function of polynomial growth, then

$\mathrm{(1)}$ For $g(x)=x^a f(x)$,
\begin{equation}
\label{eq:2.3}
 \mathcal{M}g(s) = \frac{\Gamma(s+a+1)} { \Gamma(s+1)} \mathcal{M}
 f(s+a),
\end{equation}
\indent \qquad and for $g(x)=e^{-cx}f(x)$,
\begin{equation}
 \label{eq:2.4}
 \mathcal{M}g(s) = (c+1)^{-s-1} \mathcal{M}f_c(s),
\end{equation}
\indent \qquad where $f_c(x)$ is the dilation, $f_c(x)=f(\frac
x{c+1})$.

$\mathrm{(2)}$ For $g(x)=\frac {df}{dx}(x)$,
\begin{equation}
 \label{eq:2.5}
 \mathcal{M}g(s)=\nabla \mathcal{M}f(s) :=
 \mathcal{M}f(s)-\mathcal{M}f(s-1),
\end{equation}
\indent \qquad and more generally, for any $n \in \mathbb N$ and
$g(x)=f^{(n)}(x)$,
\begin{equation}
 \label{eq:2.6}
 \mathcal{M}g(s)=\nabla^n(\mathcal{M}f)(s) = \sum_{i=0}^n (-1)^i {n
 \choose i}  \mathcal{M}f(s-i).
\end{equation}

$\mathrm{(3)}$ For $g(x)=f(x)\ln{x}$,
\begin{equation}
 \label{eq:2.7}
 \frac d{ds}\mathcal{M}f(s) = \mathcal{M}g(s) - \mathcal{M}f(s)
 \frac{\Gamma'(s+1)}{\Gamma(s+1)}.
\end{equation}

$\mathrm{(4)}$ For $g(x)=\int_0^x f(t)\ dt$,
\begin{equation}
 \label{eq:2.8}
 \mathcal{M}g(s)=\sum_{i=0}^{[s]-1} \mathcal{M}f(s-i) +
 \mathcal{M}g(s-[s]).
\end{equation}
\indent \qquad In particular,
\begin{equation}
 \label{eq:2.9}
 \mathcal{M} g(n)= \sum_{i=0}^n \mathcal{M}f(i);
\end{equation}
\end{prop}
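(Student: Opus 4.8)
The plan is to establish all four parts by direct manipulation of the defining integral \eqref{eq:2.1}, exploiting the fact that its denominator is exactly $\Gamma(s+1)$; throughout, the polynomial-growth hypothesis \eqref{eq:2.2} together with the weight $e^{-x}$ guarantees absolute convergence of every integral that appears and controls the behavior at infinity. Part (1) is purely formal. For $g(x)=x^a f(x)$ the numerator of $\mathcal{M}g(s)$ is $\int_0^\infty f(x) x^{s+a} e^{-x}\,dx=\Gamma(s+a+1)\,\mathcal{M}f(s+a)$, so dividing by $\Gamma(s+1)$ gives \eqref{eq:2.3}. For $g(x)=e^{-cx}f(x)$ I would substitute $u=(c+1)x$ in the numerator; this replaces $e^{-(c+1)x}$ by $e^{-u}$, produces the dilation $f_c(u)=f(u/(c+1))$, and the Jacobian $(c+1)^{-1}$ together with the factor $(c+1)^{-s}$ coming from $x^s$ combine into $(c+1)^{-s-1}$, which is \eqref{eq:2.4}.

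Part (2) is the heart of the matter, since it is here that the finite-difference operator $\nabla$ appears. I would integrate by parts in the numerator of $\mathcal{M}g(s)$ for $g=f'$, differentiating the weight $x^s e^{-x}$; the boundary term $[f(x) x^s e^{-x}]_0^\infty$ vanishes, at infinity because of \eqref{eq:2.2} and at the origin because $s>0$, leaving $\int_0^\infty f(x) x^s e^{-x}\,dx - s\int_0^\infty f(x) x^{s-1} e^{-x}\,dx$. The decisive step is the elementary identity $s/\Gamma(s+1)=1/\Gamma(s)$: after dividing by $\Gamma(s+1)$ it converts the second integral into $\mathcal{M}f(s-1)$, and one obtains $\mathcal{M}f(s)-\mathcal{M}f(s-1)=\nabla\mathcal{M}f(s)$, which is \eqref{eq:2.5}. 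The general formula \eqref{eq:2.6} then follows by applying this $n$ times and expanding $\nabla^n$ by the binomial theorem. The only real obstacle is bookkeeping: one must check that the boundary terms vanish at each stage of the iteration, and it is precisely here that the polynomial-growth bound and the restriction $s>0$ are genuinely used.

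The last two parts are then short. For (3) I would write $\mathcal{M}f(s)=N(s)/\Gamma(s+1)$ and apply the quotient rule, differentiating $N(s)$ under the integral sign—legitimate by the same convergence estimates, now with an extra $\ln x$ factor—so that $N'(s)=\int_0^\infty f(x)\ln x\,x^s e^{-x}\,dx=\Gamma(s+1)\,\mathcal{M}g(s)$ for $g(x)=f(x)\ln x$; collecting the two terms reproduces \eqref{eq:2.7}, with the factor $\Gamma'(s+1)/\Gamma(s+1)$ coming from differentiating the denominator. For (4), note that $g(x)=\int_0^x f(t)\,dt$ has $g'=f$ and inherits polynomial growth, so part (2) yields the recursion $\mathcal{M}g(s)-\mathcal{M}g(s-1)=\mathcal{M}f(s)$; telescoping it by integer steps until the argument enters $[0,1)$ gives \eqref{eq:2.8}. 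Finally, setting $s=n$ leaves the remainder $\mathcal{M}g(0)=\int_0^\infty g(x) e^{-x}\,dx$, and a single application of Fubini (interchanging the order of integration in the iterated integral) shows this equals $\int_0^\infty f(t) e^{-t}\,dt=\mathcal{M}f(0)$, absorbing the remainder into the sum and producing \eqref{eq:2.9}.
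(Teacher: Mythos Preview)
Your proposal is correct and follows essentially the same route as the paper: part~(1) is declared obvious there; part~(2) is done by the identical integration by parts, with \eqref{eq:2.6} obtained by iterating \eqref{eq:2.5}; part~(3) is called a direct computation; and part~(4) is derived from the same one-step recursion $\mathcal{M}g(s)=\mathcal{M}f(s)+\mathcal{M}g(s-1)$ (the paper phrases it as ``integration by parts'' rather than invoking part~(2), but since $g'=f$ these are the same), with the endpoint identity $\mathcal{M}g(0)=\mathcal{M}f(0)$ stated as obvious where you spell it out via Fubini. Your write-up is slightly more careful about the boundary terms and convergence, but there is no substantive difference in strategy.
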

\begin{proof}

The assertion (1) is obvious.

To prove (2), we note that for $g(x)=f'(x)$,
\begin{equation*} \aligned \mathcal{M}g(s) &=
\frac{\int_{0}^\infty f'(x) x^s e^{- x}\
dx}{\int_0^\infty x^s e^{- x}\ dx}\\
& = \frac{\int_0^\infty f(x)( x^s-s x^{s-1})e^{- x}\
dx}{\int_0^\infty
x^s e^{-x}\ dx}\\
& = \frac{ \int_0^\infty f(x) x^s e^{- x}\ dx}{\int_0^\infty x^s
e^{- x}\ dx} - \frac{\int_0^\infty f(x) x^{s-1} e^{- x}\
dx}{ \int_0^\infty x^{s-1} e^{- x}\ dx}\\
& =  \mathcal{M}f(s) -  \mathcal{M}f(s-1).
\endaligned
\end{equation*}
The property (\ref{eq:2.6}) is easily deduced from (\ref{eq:2.5})
by induction, and (3) is a direct computation.

To prove (4), we note that by integration by parts,
\begin{equation}
 \label{eq:2.10}
 \mathcal{M}g(s) = \mathcal{M}f(s) + \mathcal{M}g(s-1),
\end{equation}
which implies (\ref{eq:2.8}). As for (\ref{eq:2.9}), this follows
from the obvious fact $\mathcal{M}g(0)=\mathcal{M}f(0)$.
\end{proof}

From the definition its easy to see that the twisted Mellin
transform is smooth, i.e. it transform a smooth function to a
smooth function. Moreover, it transforms a function which is of
polynomial growth of degree $N$ to a function which is of
polynomial growth of degree $N$, and Schwartz functions to
Schwartz functions:

\begin{prop}
\label{prop:2.2}

$\mathrm{(1)}$ Suppose $|f(x)| \le C x^N$, then $|\mathcal{M}f(s)|
\le C' s^N.$

$\mathrm{(2)}$ $\mathcal{M}$ maps Schwartz functions to Schwartz
functions.
\end{prop}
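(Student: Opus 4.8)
\textbf{Plan for Proposition~\ref{prop:2.2}.}
The key observation to record first is that, since the denominator in (\ref{eq:2.1}) is $\Gamma(s+1)$, the expression
\[
d\mu_s(x) = \frac{x^s e^{-x}}{\Gamma(s+1)}\,dx
\]
is a \emph{probability} measure on $\mathbb{R}^+$ (the Gamma distribution with shape $s+1$), and $\mathcal{M}f(s) = \int_0^\infty f\,d\mu_s$. Part (1) is then immediate: from $|f(x)| \le Cx^N$ I would estimate
\[
|\mathcal{M}f(s)| \le C\int_0^\infty x^N\,d\mu_s(x) = C\,\frac{\Gamma(s+N+1)}{\Gamma(s+1)},
\]
and for integer $N$ the right-hand side equals $C\prod_{j=1}^N(s+j)$, a polynomial of degree $N$ in $s$; in general $\Gamma(s+N+1)/\Gamma(s+1)=O(s^N)$ by Stirling. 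This is exactly the assertion that $\mathcal{M}f$ has polynomial growth of degree $N$ (the bound $C's^N$ being read in the large-$s$ sense).

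For part (2) the real content is rapid decay, and I would extract it as a single lemma: if $g$ is continuous on $\mathbb{R}^+$ with $|g(x)|\le A(1+x)^{-M}$ for $x\ge 1$ (allowing at worst a mild logarithmic singularity at $0$), then $|\mathcal{M}g(s)|\le C_M(1+s)^{-M}$. The proof splits the integral at $x=s/2$. On $[s/2,\infty)$ one has $(1+x)^{-M}\le 2^M(1+s)^{-M}$, and since $\mu_s$ is a sub-probability there, this piece is $O((1+s)^{-M})$ directly. On $[0,s/2]$ the point is that $\mu_s$ assigns \emph{exponentially small} mass: because $x^s e^{-x}$ is increasing on $[0,s]$, $\int_0^{s/2} x^s e^{-x}\,dx \le \tfrac{s}{2}(s/2)^s e^{-s/2}$, and dividing by $\Gamma(s+1)\sim\sqrt{2\pi s}\,(s/e)^s$ produces the factor $e^{-s(\ln 2-1/2)}$, which decays exponentially since $\ln 2 > \tfrac12$. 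Thus the near-origin contribution (where $g$ is at most polynomially or logarithmically large against the minuscule weight $x^s$) is negligible. Applying the lemma to $g=f$ with every $M$ shows that $\mathcal{M}f$ itself decays faster than any power of $s$.

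To handle derivatives I would write $\mathcal{M}f(s)=\Phi(s)/\Gamma(s+1)$ with $\Phi(s)=\int_0^\infty f(x)x^s e^{-x}\,dx$. Differentiating under the integral sign (justified because the rapid decay of $f$ dominates the slowly growing $\ln x$ factors) gives $\Phi^{(k)}(s)=\Gamma(s+1)\,\mathcal{M}\!\big(f(x)(\ln x)^k\big)(s)$, and Leibniz's rule yields
\[
\frac{d^n}{ds^n}\mathcal{M}f(s)=\sum_{k=0}^n \binom{n}{k}\,Q_{n-k}(s)\,\mathcal{M}\!\big(f(x)(\ln x)^k\big)(s),
\]
where $Q_m(s)=\Gamma(s+1)\,\tfrac{d^m}{ds^m}\Gamma(s+1)^{-1}$ is a universal polynomial in the digamma function $\psi(s+1)=\Gamma'(s+1)/\Gamma(s+1)$ and its derivatives; for $n=1$ this is precisely (\ref{eq:2.7}). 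Since $\psi(s+1)\sim\ln s$ and its derivatives are $O(s^{-1})$, each $Q_m$ grows at most like $(\ln s)^m$. Because $f$ is Schwartz, each $f(x)(\ln x)^k$ still decays faster than any polynomial, so the concentration lemma makes every $\mathcal{M}(f(x)(\ln x)^k)(s)$ decay rapidly, and multiplying by the polylogarithmically-growing $Q_{n-k}$ preserves this. Hence all $s$-derivatives of $\mathcal{M}f$ decay faster than any polynomial, which with the already-noted smoothness gives that $\mathcal{M}f$ is Schwartz. I expect the main obstacle to be the concentration lemma of the second paragraph — specifically the exponential smallness of the lower tail $\mu_s([0,s/2])$, driven by $\ln 2>\tfrac12$ in Stirling's formula — since that is exactly what converts rapid decay of $f$ into rapid decay of $\mathcal{M}f$; once it is established, the derivative bookkeeping above is routine.
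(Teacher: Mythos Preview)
Your argument is correct. Part~(1) is exactly the paper's computation, and for part~(2) your overall architecture---iterate (\ref{eq:2.7}) to write $\frac{d^n}{ds^n}\mathcal{M}f$ as a combination of $\mathcal{M}\bigl(f(\ln x)^k\bigr)$ times factors built from $\psi(s+1)$ and its derivatives, then show each such $\mathcal{M}$-term decays rapidly---matches the paper. The genuine difference is in how you obtain the rapid decay of $\mathcal{M}f(s)$ itself. The paper simply reuses part~(1) with a negative exponent: from the Schwartz seminorm $|x^\alpha f(x)|\le C$ one has $|f(x)|\le Cx^{-\alpha}$, whence $|\mathcal{M}f(s)|\le C\,\Gamma(s-\alpha+1)/\Gamma(s+1)\sim Cs^{-\alpha}$. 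Your concentration lemma (split at $x=s/2$, use Stirling to show $\mu_s([0,s/2])$ is exponentially small) reaches the same conclusion by a more probabilistic route; it is heavier but makes transparent \emph{why} the decay happens---the Gamma measure concentrates near its mean. Incidentally, your treatment of the polygamma factors is more careful than the paper's: you correctly note $\psi(s+1)\sim\ln s$, whereas the paper's blanket claim that $\psi^{(m)}(s+1)$ is bounded fails for $m=0$, though this is harmless since logarithmic growth is absorbed by the rapid decay of the $\mathcal{M}$-terms.
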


\begin{proof}

(1) This comes from the definition:
\begin{equation*}
 |\mathcal{M}f(s)| \le \frac{\int_0^\infty C x^N x^s e^{-x}\
 dx}{\Gamma(s+1)} = C \frac{\Gamma(s+N+1)}{\Gamma(s+1)} \le C' s^N.
\end{equation*}

(2) Suppose $f$ is a Schwartz function, i.e. for any $\alpha,
\beta$, there is a constant $C_{\alpha, \beta}$ such that
$\sup_x{|x^\alpha \partial^\beta f(x)|} \le C_{\alpha, \beta}$.

For $\beta=0$, $|x^\alpha f(x)| \le C$ implies $|s^\alpha
\mathcal{M}f(s)| \le C'$.

For $\beta=1$, we apply (\ref{eq:2.7}) and the above result to get
$|s^\alpha \frac d{ds}\mathcal{M}f(s)| \le C_\alpha$.

For $\beta \ge 1$, let $\psi(s)={\Gamma'(s)}/{\Gamma(s)}$. Then by
repeated applications of (\ref{eq:2.7}) one can see that $\frac
{d^n}{ds^n}\mathcal{M}f(s)$ is a linear combination of the
functions $\mathcal{M}g_i(s)\psi^{(j)}(s+1)$, where
$g_i(x)=f(x)(\ln{x})^i$ and
\begin{equation}
 \label{eq:2.11}
 \psi^{(m)}(s+1) = \frac{d^m}{ds^m}\psi(s+1)
\end{equation}
is the polygamma function, which is bounded for each $m$, as is
clear from its integral representation:
\begin{equation*}
|\psi^{(m)}(s+1)|=\left|(-1)^{m+1}\int_0^\infty
\frac{t^me^{-(s+1)t}} {1-e^{-t}}dt\right| \le \int_0^\infty
\frac{t^me^{-t}}{1-e^{-t}}dt = \zeta(m+1)\Gamma(m+1).
\end{equation*}
Thus by induction we easily deduce that $|s^\alpha
\partial^\beta \mathcal{M}f(s)| \le C_{\alpha,\beta}$.
\end{proof}

\begin{remark}
Since the twisted Mellin transform transforms a Schwartz function to
a Schwartz function, we can define the twisted Mellin transform on
tempered distributions by duality.
\end{remark}

We will next compute the twisted Mellin transform for some
elementary functions such as polynomials, exponentials and
trigonometric functions.

(a) For $f(x)=x^a$,
\begin{equation}
 \label{eq:2.12}
 \mathcal{M}f(s)=\Gamma(s+a+1)/\Gamma(s+1).
\end{equation}
In particular, if $f(x)=x^n$, $n$ a positive integer, then
\begin{equation}
 \label{eq:2.13}
 \mathcal{M}f(s)=s^{[n]}: =(s+1)(s+2)\cdots(s+n).
\end{equation}
Thus the twisted Mellin transform of a polynomial of degree $n$ is
again a polynomial of degree $n$.

(b) Suppose $a>1$, then for $f(x)=a^{-x}$,
\begin{equation}
 \label{eq:2.14}
 \mathcal{M}f(s)=(\ln{a}+1)^{-1-s}.
\end{equation}
More generally, if $f(x)=x^b a^{-x}$, then
\begin{equation}
 \label{eq:2.15}
 \mathcal{M}f(s)= (\ln a+1)^{-1-b-s}\Gamma(s+b+1)/\Gamma(s+1).
\end{equation}

(c) For $f(x)=\frac 1{1-e^{-x}}$,
\begin{equation}
 \label{eq:2.16}
 \mathcal{M}f(s)=\zeta(s+1),
\end{equation}
and as a corollary, for the Todd function $f(x)=\frac
x{1-e^{-x}}$,
\begin{equation}
 \label{eq:2.17}
 \mathcal{M}f(s)=(s+1) \zeta(s+2).
\end{equation}

(d) For $f(x)=\ln{x}$, one gets from (\ref{eq:2.7})
\begin{equation}
 \label{eq:2.18}
 \mathcal{M}f(s)=\frac{\Gamma'(s+1)}{\Gamma(s+1)},
\end{equation}
and in general, for $f(x)=(\ln{x})^n$,
\begin{equation}
 \label{eq:2.19}
 \mathcal{M}f(s)=\frac{\Gamma^{(n)}(s+1)}{\Gamma(s+1)}.
\end{equation}

(e) For the trigonometric functions $f(x)=\sin{x}$ and
$g(x)=\cos{x}$,
\begin{equation}
 \label{eq:2.20}
 \aligned \mathcal{M}f(s) &=\frac{1}{(\sqrt
 2)^{s+1}} \sin{\frac{(s+1)\pi}{4}},\\
 \mathcal{M}g(s)&=\frac{1}{(\sqrt 2)^{s+1}} \cos{\frac{(s+1)\pi}{4}}.
 \endaligned
\end{equation}

\noindent({\sl Proof.} Let $h(x)=e^{ix}$, then
$\mathcal{M}h(s)=\frac 1{(1-i)^{s+1}}$, which gives
(\ref{eq:2.20}).)

Similarly for $f(x)=\sin(ax)$ and $g(x)=\cos(ax)$,
\begin{equation}
 \label{eq:2.21}
 \aligned
 \mathcal{M}f(s)&=(1+a^2)^{-s}\sin(s\arctan{a}), \\
 \mathcal{M}g(s)&=(1+a^2)^{-s}\cos(s\arctan{a}).
 \endaligned
\end{equation}

\noindent{\sl Some concluding remarks:}

(1) From the inversion formula for the Mellin transform, we obtain
an inversion formula for the twisted Mellin transform:
\begin{equation}
 \label{eq:2.22}
 f(x)=e^x \int_{c-\infty i}^{c+\infty
 i}\Gamma(s+1)\mathcal{M}f(s)x^{-s-1}ds.
\end{equation}
and from the Parseval formula (c.f. \cite{PaK}) a ``Parsevel-like"
formula for $\mathcal M$
\begin{equation}
 \label{eq:2.23}
 \int_0^\infty f(x)g(x) x^2 e^{-2x}\ dx = \frac 1{2i}\int_{c-\infty
 i}^{c+\infty
 i}\mathcal{M}f(1-s)\mathcal{M}g(s)\frac{s(1-s)}{\sin{\pi s}}\ ds.
\end{equation}

(2) Letting $(x)^{(n)}=x(x+1)\cdots(x+n-1)$, (\ref{eq:2.13}) becomes
$\mathcal{M}f(s-1)=(s)^{(n)}$, where $f(x)=x^n$. Also expanding
$s^{[n]}$ in terms of $s^n$, we get
\begin{equation}
 \label{eq:2.24}
 s^{[n]}=(s+1)(s+2)\cdots(s+n)=\sum_{k=0}^{n} c(n+1, k+1)s^k,
\end{equation}
where $c(n,k)$ is the signless Stirling number of first kind, c.f.
\cite{St1}. Note that both $\{x^n\}$ and $\{x^{(n)}\}$ are a basis
of the polynomial ring, thus $\mathcal{M}$ is a bijection from the
polynomial ring to itself.

(3) Formula (\ref{eq:2.5}) tells us that $\mathcal M$ conjugates
the differential operator, $\frac d{dx}$, to the backward
difference operator (\ref{eq:1.3}). In combinatorics both $\frac
d{dx}$ and the backward difference operator are ``delta"
operators, with the functions $x^n$ and $x^{(n)}$ as their
sequence of basic polynomials. Thus by a theorem of R.Mullin and
G-C.Rota (\cite{MuR}), the map $T: f(x) \mapsto \mathcal{M}f(s-1)$
is invertible and the map $S \mapsto TST^{-1}$ an automorphism of
the algebra of shift-invariant operators onto the algebra of
polynomials. Moreover, $T$ maps every sequence of basic
polynomials into a sequence of basic polynomials. Such an operator
is called an \emph{umbral} operator in the umbral calculus.

(4) If we replace the Bargmann measure, $\mu$, by the generalized
Bargmann measure $\mu_\alpha = e^{-\alpha |z|^2}\ dzd\bar z$, then
we are naturally led, by the argument at the beginning of this
section, to studying the ``$\alpha$-twisted Mellin transform"
\begin{equation}
 \label{eq:2.25}
 \mathcal{M}_\alpha f(s) = \frac{\int_0^\infty f(x) x^s e^{-\alpha
 x}\ dx }{\int_0^\infty x^s e^{-\alpha x}\ dx}.
\end{equation}
All the properties in Proposition 2.1 can be easily generalized to
$\mathcal{M}_\alpha$. Moreover, it is easy to see that
\begin{equation}
 \label{eq:2.26}
 \mathcal{M}_\alpha f_c(s) = \mathcal{M}_{\alpha/c} f(s),
\end{equation}
where $f_c(x)=f(cx)$, so the $\alpha$-twisted Mellin transform of a
function can easily be computed from the twisted Mellin transform.

(5) The twisted Mellin transform can also be defined in higher
dimension in a similar way. For the function $f(r_1, \cdots,
r_d)=f_1(r_1) \cdots f_d(r_d)$, its twisted Mellin transform is just
the product of the twisted Mellin transform of $f_1, \cdots, f_d$.

\section{The Asymptotic Expansion.}

We can rewrite the twisted Mellin transform as
\begin{equation}
 \label{eq:3.1}
 \mathcal{M}f(s)=\frac{\int_0^\infty f(x)e^{s\log{x}-x}\
 dx}{\int_0^\infty e^{s\log{x}-x}\ dx}.
\end{equation}
For the phase function $\varphi(x,s)={s\log{x}-x}$, we have
\begin{equation*}
0=\frac{\partial \varphi}{\partial x}\quad \Longrightarrow \quad
x=s,
\end{equation*}
thus the function $\varphi_s(x)=\varphi(x,s)$ has a unique critical
point at $x=s$. Moreover, this is a global maximum of
$\varphi(x,s)$, since
\begin{equation*}
 \lim_{x \to
 +\infty}\varphi(x,s)=-\infty,
\end{equation*}
and
\begin{equation*}
\frac{\partial^2
 \varphi}{\partial x^2}=-\frac{s}{x^2}<0.
\end{equation*}
Hence if $f$ is a symbol, we can apply the method of steepest
descent to both denominator and numerator to get
\begin{equation}
 \label{eq:3.2}
 \mathcal{M}f(s) \sim \sum_k g_k(s)f^{(k)}(s).
\end{equation}
To compute the functions $g_k(s)$ consider the Taylor expansion of
$f$,
\begin{equation}
 \label{eq:3.3}
 f(x) = \sum_{r=0}^\infty \frac 1{r!}f^{(r)}(s) (x-s)^r.
\end{equation}
Applying $\mathcal M$ to (3.3) with $s$ fixed we get
\begin{equation}
 \label{eq:3.4}
 \mathcal{M}f(s)=\sum_{r=0}^\infty \frac 1{r!}f^{(r)}(s)f_r(s),
\end{equation}
where
\begin{equation}
 \label{eq:3.5}
 f_r(s) = \frac{\int_0^\infty (x-s)^r x^s e^{-x} \ dx}{\int_0^\infty
x^s e^{-x} \ dx}  = \sum_{i=0}^r (-1)^{r-i}{r \choose
i}s^{[i]}s^{r-i}.
\end{equation}
For $r \le 5$ small,
\begin{equation*}
 \aligned
 f_0(s)&=f_1(s)=1,\\
 f_2(s)&=2+s,\\
 f_3(s)&=6+5s,\\
 f_4(s)&=24+26s+3s^2,\\
 f_5(s)&=120+154s+35s^2.
 \endaligned
\end{equation*}
which suggests that  $f_r(s)$ is a polynomial of degree $[r/2]$ in
$s$. We will give two separate proofs of this fact, one
combinatorial and one analytic.

\noindent{\sl Proof 1.} Putting (\ref{eq:2.24}) into
(\ref{eq:3.5}), we get
\begin{equation*}
 \aligned f_r(s)&=\sum_{i=0}^r \sum_{k=0}^i (-1)^{r-i} {r \choose i}
 c(i+1, k+1) s^{r-(i-k)}\\
 &= \sum_{j=0}^r \left(\sum_{i=j}^r (-1)^{r-i} {r \choose i} c(i+1,
 i-j+1)\right) s^{r-j}.
 \endaligned
\end{equation*}

On the other hand, by the definition of the Stirling number,
\begin{equation}
 \label{eq:3.6}
 c(n+1, n+1-k)= c(n, n-k) + n c(n, n-(k-1)).
\end{equation}
and from this recurrence relation we will show:

\begin{lemma}
\label{lemma:3.1}

There are constants $C_{l,j}$, depending only on $l,j$, such that
\begin{equation}
 \label{eq:3.7}
 c(i+1, i+1-j) = \sum_{l=j}^{2j} C_{l,j}\ (i)_l,
\end{equation}
where $(i)_l = i(i-1)\cdots(i-l+1)$ is the falling factorial.
\end{lemma}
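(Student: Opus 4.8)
The plan is to reduce the lemma to two structural facts about the function $P_j(i):=c(i+1,i+1-j)$, regarded as a function of the integer variable $i$. First I would rewrite the recurrence (\ref{eq:3.6}): putting $n=i$, $k=j$ and noting that $c(i,i-j)=P_j(i-1)$ and $c(i,i-(j-1))=P_{j-1}(i-1)$, relation (\ref{eq:3.6}) becomes
\begin{equation*}
P_j(i)=P_j(i-1)+i\,P_{j-1}(i-1),\qquad\text{i.e.}\qquad \nabla P_j(i)=i\,P_{j-1}(i-1),
\end{equation*}
where $\nabla$ is the backward difference of (\ref{eq:1.3}). This single recurrence drives everything.

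The two facts I would then establish are: (A) $P_j$ agrees with a polynomial in $i$ of degree at most $2j$; and (B) $P_j(i)=0$ for $i=0,1,\dots,j-1$. Granting these, the lemma follows formally. Since $\{(i)_l\}_{l\ge0}$ is a basis of the polynomial ring, (A) lets me write $P_j(i)=\sum_{l=0}^{2j}C_{l,j}(i)_l$ with only $l\le 2j$ occurring, which already secures the upper limit. To kill the coefficients $C_{0,j},\dots,C_{j-1,j}$ I would evaluate successively at $i=0,1,\dots,j-1$: because $(m)_l=0$ whenever $l>m$, evaluating at $i=m$ gives $P_j(m)=\sum_{l=0}^{m}C_{l,j}(m)_l$, so (B) together with the already-known vanishing of $C_{0,j},\dots,C_{m-1,j}$ forces $C_{m,j}(m)_m=0$ and hence $C_{m,j}=0$. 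What remains is exactly $P_j(i)=\sum_{l=j}^{2j}C_{l,j}(i)_l$, which is (\ref{eq:3.7}).

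It then remains to prove (A) and (B). Fact (A) I would prove by induction on $j$ from the recurrence: the base case is $P_0(i)=c(i+1,i+1)=1$, a constant, and in the inductive step $i\,P_{j-1}(i-1)$ is a polynomial of degree at most $(2j-2)+1=2j-1$; since $\nabla$ lowers the degree of a nonconstant polynomial by exactly one, its anti-difference (unique up to an additive constant) raises degree by one, so $P_j$ agrees with a polynomial of degree at most $2j$. Fact (B) is the genuinely arithmetic input and is immediate: for $0\le i\le j-1$ the lower index $i+1-j$ is $\le 0$ while the upper index $i+1\ge 1$, and a signless Stirling number $c(n,k)$ with $n\ge1$ and $k\le0$ is $0$.

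The only real obstacle here is conceptual, not computational. The operator $\nabla$ does not act diagonally on the falling-factorial basis—one has $(i)_l-(i-1)_l=l\,(i-1)_{l-1}$, with an unavoidable unit shift $i\mapsto i-1$—so directly substituting the ansatz (\ref{eq:3.7}) into the recurrence and matching coefficients is messy, and, more seriously, the anti-difference is determined only up to a constant. The reduction to (A) and (B) isolates precisely this ambiguity: (A) controls the top of the expansion, while (B), the vanishing of the relevant Stirling numbers, pins down the bottom. If one insists on the explicit constants, matching coefficients in the $(i-1)_m$ basis yields the recursion $C_{l,j}=C_{l-1,j-1}+\tfrac{1}{l}\,C_{l-2,j-1}$, whose support is visibly $j\le l\le 2j$; but this refinement is not needed for the existence statement asserted by the lemma.
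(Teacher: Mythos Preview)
Your argument is correct. Both your proof and the paper's run the same induction on $j$ via the recurrence $P_j(i)=P_j(i-1)+i\,P_{j-1}(i-1)$, and both get the upper bound $l\le 2j$ from a degree count. Where they diverge is in securing the lower bound $l\ge j$. The paper anti-differences \emph{explicitly} in the falling-factorial basis: since $i\,(i-1)_l=(i)_{l+1}$ and $\sum_{i}(i)_{l+1}=\tfrac{1}{l+2}(i+1)_{l+2}=\tfrac{1}{l+2}(i)_{l+2}+(i)_{l+1}$, telescoping the recurrence shows directly that $P_j$ lands in $\mathrm{span}\{(i)_j,\dots,(i)_{2j}\}$ plus a single constant of summation, pinned down by the one value $P_j(0)=c(1,1-j)=0$. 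You instead argue abstractly that an anti-difference exists (degree $\le 2j$) and then kill the bottom $j$ coefficients by invoking the $j$ zeros $P_j(0)=\cdots=P_j(j-1)=0$. The paper's route is shorter and, as you yourself observe at the end, immediately yields the recursion $C_{l,j}=C_{l-1,j-1}+\tfrac{1}{l}C_{l-2,j-1}$ for the constants; your decomposition into (A) and (B) is cleaner conceptually and makes transparent exactly which arithmetic input (the vanishing of $c(n,k)$ for $n\ge1$, $k\le0$) is doing the work---the paper leaves that step implicit.
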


\begin{proof}

This is true for $j=0$, since $c(n+1,n+1)=1$. Notice that
\begin{equation*}
 \sum_{i=l}^n (i)_l = l!\left({l \choose l}+{l+1 \choose l}+ \cdots
 +{n \choose l}\right)=l!{n+1 \choose l+1} = \frac 1{l+1}(n+1)_{l+1}.
\end{equation*}
Now use induction and the recurrence relation (\ref{eq:3.6}).
\end{proof}

Now suppose $2j \le r$, then the coefficients of $s^{r-j}$ in
$f_r(s)$ is
\begin{equation*}
 \aligned \sum_{i=j}^r (-1)^{r-i} {r \choose i} c(i+1, i-j+1) =
 &\sum_{i=j}^r (-1)^{r-i} {r \choose i} \sum_{l=j}^{2j} C_{l,j}\
 (i)_l\\
 = &\sum_{l=j}^{2j} C_{l,j} (r)_{l} \sum_{i=l}^r(-1)^{r-i}{r-l
 \choose i-l}\\
 = &0,
 \endaligned
\end{equation*}
which proves that $f_r$ is a polynomial of degree $[r/2]$.

\noindent{\sl Proof 2.} First we derive a recurrence relation for
$f_r(s)$. Using
\begin{equation*}
 \frac{d}{dx}(s\log{x}-x) = -\frac{x-s}x
\end{equation*}
and integration by parts we get
\begin{equation*}
 \aligned \Gamma(s+1) f_r(s) &= -\int_0^\infty e^{s\ln{x}-x} x(x-s)^{r-1}\frac{d}{dx}(s\ln{x}-x)\ dx\\
 &= \int_0^\infty e^{s\ln{x}-x}\frac{d}{dx}(x(x-s)^{r-1})\ dx\\
 &= \int_0^\infty e^{s\ln{x}-x}\frac{d}{dx}((x-s)^{r}+s(x-s)^{r-1})\ dx\\
 &= r\int_0^\infty x^s e^{-x} (x-s)^{r-1}\ dx + (r-1)s\int_0^\infty
 x^s e^{-x} (x-s)^{r-2}\ dx,
 \endaligned
\end{equation*}
i.e.
\begin{equation}
 \label{eq:3.8}
 f_r(s)=r f_{r-1}(s) + (r-1)sf_{r-2}(s).
\end{equation}
Moreover, we can compute the initial conditions directly
\begin{equation}
 \label{eq:3.9}
 f_1(s) = f_0(s)=1.
\end{equation}
\begin{remark}
The recurrence relation (\ref{eq:3.8})
also follows easily from (\ref{eq:2.3}) and (\ref{eq:2.5}). In
fact, if we denote $h_r(x)=(x-s)^r$, then
$f_r(s)=\mathcal{M}h_r(s)$, and thus
\begin{equation*}
\aligned
r f_{r-1}(s)&=\mathcal{M}h_r(s)-\mathcal{M}h_r(s-1)\\
&=f_r(s)-(\mathcal{M}(xh_{r-1})(s-1)-s\mathcal{M}h_{r-1}(s-1))\\
&=f_r(s)-s(\mathcal{M}h_{r-1}(s)-\mathcal{M}h_{r-1}(s-1))\\
&=f_r(s)-s(r-1)\mathcal{M}h_{r-2}(s).
\endaligned
\end{equation*}
\end{remark}

From (\ref{eq:3.8}), (\ref{eq:3.9}) and induction, it follows
again that $f_r(s)$ is a polynomial of degree $[r/2]$. Thus coming
back to (3.4) we have proved

\begin{theorem}
 \label{thm:3.2}

For any symbolic function $f$, we have
\begin{equation}
 \label{eq:3.10}
 \mathcal{M}f(s) \sim \sum_{r} \frac 1{r!}f^{(r)}(s)f_r(s),
\end{equation}
where $f_r(s)$ is the polynomial of integer coefficients of degree
$[r/2]$ given by (\ref{eq:3.5}).
\end{theorem}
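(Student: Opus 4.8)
The statement bundles three assertions: that $\mathcal{M}f(s)$ genuinely possesses an asymptotic expansion, that its $r$-th coefficient is $\tfrac{1}{r!}f^{(r)}(s)f_r(s)$ with $f_r$ the moment integral (\ref{eq:3.5}), and that each $f_r$ is a polynomial of degree $[r/2]$ with integer coefficients. The last of these is already proved twice above — combinatorially through the Stirling expansion (\ref{eq:2.24}) and Lemma \ref{lemma:3.1}, and analytically through the recurrence (\ref{eq:3.8}) with initial data (\ref{eq:3.9}) — so I would invoke it directly. The real content to be supplied is the justification that the formal identity (\ref{eq:3.4}) is an \emph{asymptotic} identity, i.e. that truncating yields an error of the order of the first omitted term. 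The guiding picture is probabilistic: $\tfrac{x^se^{-x}}{\Gamma(s+1)}\,dx$ is the Gamma$(s+1,1)$ law, whose moments \emph{centered at the critical point} $x=s$ are exactly the $f_r(s)$; since these have degree $[r/2]$, the law concentrates at width $\sqrt{s}$ about $x=s$, so that (\ref{eq:3.10}) is really an expansion in descending powers of $s^{1/2}$.

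To make this rigorous I would fix a target order $N$, choose an auxiliary integer $M\ge 2N$, and expand $f$ about $x=s$ by Taylor's theorem with integral remainder, $f(x)=\sum_{r=0}^{M-1}\tfrac{1}{r!}f^{(r)}(s)(x-s)^r+R_M(x,s)$, where $R_M(x,s)=\tfrac{1}{(M-1)!}\int_s^x(x-t)^{M-1}f^{(M)}(t)\,dt$. Applying the linear operator $\mathcal{M}$ and using $f_r(s)=\mathcal{M}[(x-s)^r](s)$ from (\ref{eq:3.5}) gives
\begin{equation*}
\mathcal{M}f(s)=\sum_{r=0}^{M-1}\frac{1}{r!}f^{(r)}(s)f_r(s)+\frac{1}{\Gamma(s+1)}\int_0^\infty R_M(x,s)\,x^se^{-x}\,dx.
\end{equation*}
By the symbol bound (\ref{eq:1.4}) and the degree count, the $r$-th explicit term is of size $f^{(r)}(s)f_r(s)=O(s^{k-r})\cdot O(s^{[r/2]})=O(s^{k-\lceil r/2\rceil})$; since $\lceil r/2\rceil$ is nondecreasing, every term with $r\ge N$ is already $O(s^{k-\lceil N/2\rceil})$, the order of the first omitted term. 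This is the device that avoids any delicate cancellation between even and odd moments: I simply absorb the intermediate explicit terms $r=N,\dots,M-1$ into the error and bound the single higher remainder crudely.

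The heart of the argument is then the remainder integral, which I would estimate by splitting $(0,\infty)$ into the bulk $\{|x-s|\le s/2\}$ and its complement. On the bulk, each $t$ between $s$ and $x$ is comparable to $s$, so (\ref{eq:1.4}) gives $|f^{(M)}(t)|\le C_M t^{k-M}\le C_M' s^{k-M}$ and hence $|R_M(x,s)|\le \tfrac{C_M'}{M!}|x-s|^M s^{k-M}$; integrating the absolute value against the measure and using the (even-order) moment bound $\int|x-s|^M\tfrac{x^se^{-x}}{\Gamma(s+1)}\,dx=O(s^{M/2})$ produces a bulk contribution of size $O(s^{k-M/2})$, which for $M\ge 2N$ is $O(s^{k-N})$ and a fortiori $O(s^{k-\lceil N/2\rceil})$. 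On the complementary tail I would use the exponentially small mass that the Gamma law places far from its mean — a standard large-deviation bound for $\tfrac{x^se^{-x}}{\Gamma(s+1)}$ — which dominates the merely polynomial growth of $R_M$ inherited from (\ref{eq:2.2}) and (\ref{eq:1.4}), rendering the tail $O(s^{-\infty})$. Collecting the two pieces shows $\mathcal{M}f(s)-\sum_{r=0}^{N-1}\tfrac{1}{r!}f^{(r)}(s)f_r(s)=O(s^{k-\lceil N/2\rceil})$; since $N$ is arbitrary, this is precisely the assertion (\ref{eq:3.10}).

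The step I expect to be the main obstacle is the uniform control of the remainder across the whole half-line: reconciling the merely polynomial growth permitted by (\ref{eq:1.4}) with the Gaussian-type concentration of the measure at width $\sqrt{s}$, and in particular making the tail estimate quantitative enough to beat every power of $s$. A secondary point requiring care is the bulk bound when $k<M$, where $t^{k-M}$ is largest at the inner endpoint $t=s/2$ rather than near $t=s$; this merely alters the constant multiplying $s^{k-M}$ and does not change the order. Once these estimates are in place, the identification of coefficients from (\ref{eq:3.4})--(\ref{eq:3.5}) and the degree count from Proofs 1 and 2 complete the theorem.
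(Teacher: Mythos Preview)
Your proposal is correct and follows the same architecture as the paper: Taylor-expand $f$ about the critical point $x=s$, identify the coefficients as the centered moments $f_r(s)$ of (\ref{eq:3.5}), and invoke the degree count $\deg f_r=[r/2]$ from Proofs~1 and~2 to conclude that (\ref{eq:3.4}) is a genuine asymptotic series. The one difference is in how the asymptotic nature of the expansion is justified: the paper simply appeals to the method of steepest descent applied to numerator and denominator of (\ref{eq:3.1}) and leaves the remainder analysis implicit, whereas you carry out an explicit Taylor-with-remainder argument and a bulk/tail split against the Gamma$(s{+}1,1)$ density. Your route is more self-contained and makes transparent exactly why the degree bound $[r/2]$ is what converts the formal identity (\ref{eq:3.4}) into an asymptotic one; the paper's invocation of steepest descent buys brevity at the cost of those details.
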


The polynomials $f_r(s)$ have many interesting combinatorial
properties:

(1) Since $f_r(s)$ is a polynomial of degree $[r/2]$, we can write
\begin{equation}
 \label{eq:3.12}
 f_r(s)=\sum_{i=0}^{[r/2]}a_{r,i} s^i,
\end{equation}
the coefficients satisfying the recurrence relation
\begin{equation}
  \label{eq:3.13}
 a_{r,i}=r a_{r-1,i}+ (r-1) a_{r-2,i-1}
\end{equation}
and initial conditions
\begin{equation*}
 a_{r,0}=r!,\quad a_{2k,k}=(2k-1)!!,
\end{equation*}
which implies
\begin{equation*}
 \aligned
 a_{r,1}=& r! \left(\frac 1r + \frac 1{r-1} + \cdots + \frac 12\right),\\
 a_{r,2}=& r! \left(\frac{(r-1)a_{r-2,1}}{r!}+\frac{(r-2)a_{r-3,1}}{(r-1)!}+ \cdots +
 \frac{3a_{2,1}}{4!} \right),
 \endaligned
\end{equation*}
and in general
\begin{equation}
 \label{eq:3.14}
 a_{r,k}= r! \left(\frac{(r-1)a_{r-2,k-1}}{r!}+\frac{(r-2)a_{r-3,k-1}}{(r-1)!}+ \cdots +
 \frac{(2k-1)a_{2k-2,k-1}}{(2k)!}\right).
\end{equation}

(2) The coefficients, $a_{r, i}$, of $f_r(s)$, are exactly those
appeared as coefficients of polynomials used for exponential
generating functions for diagonals of unsigned Stirling numbers of
the first kind. More precisely, for fixed $k$, the exponential
generating function for the sequence $\{c(n+1, n+1-k)\}_{n \ge 0}$
is given by (c.f. sequence A112486 in ``The On-Line Encyclopedia
of Integer Sequences")
$$\sum_{n=0}^\infty c_{n+1, n+1-k} \frac{x^n}{n!} = e^x \sum_{n=k}^{2k} \left(a_{n, n-k}\frac{x^n}{n!}\right).$$

(3) The sequence of functions $f_r$'s have a pretty simple
exponential generating function:
\begin{equation*}
\aligned \sum_{r=0}^\infty f_r(s) \frac{x^r}{r!} & =
\sum_{i=0}^\infty \sum_{r=i}^\infty (-1)^{r-i}\frac 1{r!}{r
\choose
i}s^{[i]}s^{r-i}x^r \\
& = \left(\sum_{i=0}^\infty
\frac{s^{[i]}x^i}{i!}\right)\left(\sum_{r=i}^\infty
(-1)^{r-i}\frac{s^{r-i}x^{r-i}}{(r-i)!}\right) \\
& = \frac{e^{-sx}}{(1-x)^{1+s}}.
\endaligned
\end{equation*}

(4) From the generating function above we get a combinatorial
interpreting of $f_r(s)$ for integers $s$: $r!f_r(s)$ is the
number of $r \times r$ $\mathbb N$-matrices with every row and
column sum equal to $3+2s$ and with at most 2 nonzero entries in
every row. (c.f. Exercise 5.62 of \cite{St}).

(5) There are also other combinatorial interpreting for small
value of $s$. For example, the sequence $f_r(1)$ count
permutations $w$ of $\{1, 2, \cdots, r+1\}$ such that $w(i+1) \ne
w(i) + 1$ (c.f. the sequence A000255 of ``On-line Encyclopedia of
Integer Sequences"). For $s=2$, we have
\begin{equation*}
f_r(2) = \frac{2^{-r^2}}{r!} \sum_{M \in D_r}(\det M)^4,
\end{equation*}
where $D_r$ is the set of all $r \times r$ matrices of $\pm 1$'s.
(c.f. Exercise 5.64(b) of \cite{St}).

We will conclude by deriving a slight variant of the asymptotic
expansion above, which will be needed for the application in
\cite{GW}. Given a symbolic function $f$, consider the integral
\begin{equation}
 \label{eq:3.15}
 A_N(f)(s) = \frac{\int_0^\infty f(x)x^{Ns}e^{-Nx}\ dx}{\int_0^\infty
 x^{Ns}e^{-Nx}\ dx},
\end{equation}
as $N \to \infty$. By definition, this is just the ``$N$-twisted
Mellin transform" $\mathcal{M}_Nf(Ns)$, which, according to
(\ref{eq:2.26}), equals $\mathcal{M}f_N(Ns)$, where
$f_N(x)=f(x/N)$. Thus by Theorem \ref{thm:3.2},
\begin{equation}
 \label{eq:3.16}
 A_N(f)(s) \sim \sum_k \left(\frac 1N\right)^k f^{(k)}(s) g_k(Ns).
\end{equation}
Note that since $g_k(x)$ is a polynomial of degree $[k/2]$, the
above formula does give us an asymptotic expansion. In particular,
we have
\begin{equation*}
 A_N(f)(s)=f(s)+\frac 1N \left(f'(s)+f''(s)\frac s2\right)+\frac
 1{N^2}\left(f''(s)+f'''(s)\frac{5s}6+f^{(4)}(s)\frac{s^2}8\right)+O(N^{-3}).
\end{equation*}

\end{document}